\def\R {\Bbb R}
 \numberwithin{equation}{section} 
\newcommand{\beq}{\begin{equation}}
\newcommand{\eeq}{\end{equation}}
\newcommand{\ben}{\begin{eqnarray}}
\newcommand{\een}{\end{eqnarray}}
\newcommand{\bet}{\begin{eqnarray*}}
\newcommand{\eet}{\end{eqnarray*}}
\newtheorem{thm}{Theorem}[section]
\newtheorem{lem}[thm]{Lemma}
\newtheorem{prop}[thm]{Proposition}
\newtheorem{ques}[thm]{Question}
\def\Z{{\Bbb Z}}
\def\R {\Bbb R}
\def\N {\Bbb N}
\theoremstyle{plain}
\begin{document}
\baselineskip 16pt

\title
{A note on  Hausdorff measures of self-similar sets in $\R^d$}

\author{Cai-yun Ma}

\author{Yu-feng Wu}

\keywords{Self-similar sets, Hausdorff measure}

 \thanks {
2010 {\it Mathematics Subject Classification}: 28A78, 28A80}

\begin{abstract}
We prove that for all $s\in(0,d)$ and  $c\in (0,1)$ there exists a self-similar set $E\subset \R^d$   with   Hausdorff dimension $s$ such that $\mathcal{H}^s(E)=c|E|^s$. This answers a question raised by  Zhiying Wen \cite{Wen}.
\end{abstract}

\maketitle

\setcounter{section}{0}
\section{Introduction}
In this paper, we investigate Hausdorff measures of self-similar sets in $\R^d$. Recall that  given a finite family of contracting similitudes $\Phi=\{\phi_i\}_{i=1}^{\ell}$ on $\R^d$,
 there is a unique non-empty compact set $K\subset \R^d$ satisfying  $K=\bigcup_{i=1}^{\ell}\phi_i(K)$. We call $\Phi$  an {\it iterated function system} (IFS) of similitudes and  $K$  the {\em self-similar set} generated by $\Phi$. Moreover, given a probability vector $\mathbf{p}=(p_1,\ldots,p_{\ell})$, i.e. all $p_i>0$ and $\sum_{i=1}^{\ell}p_i=1$, there is a unique Borel probability measure $\nu$ supported on $K$ satisfying
 \[\nu=\sum_{i=1}^{\ell}p_i\nu\circ \phi_i^{-1}.\] We call $\nu$  the {\em self-similar measure} generated by $\Phi$ and $\mathbf{p}$. We refer the reader to \cite{hutchinson,falconer} for the examples and detailed properties of self-similar sets and self-similar measures.

We say that $\Phi$ satisfies the {\em strong separation condition} (SSC) if $\phi_i(K)\cap \phi_j(K)=\emptyset$ for all distinct $i,j\in \{1,\ldots,\ell\}$. Similarly, we say that  $\Phi$ satisfies the {\em open set condition} (OSC) if there exists a non-empty bounded open set $V\subset \R^d$ such that $\phi_1(V),\ldots, \phi_{\ell}(V)$ are disjoint subsets of  $V$. Under   the OSC,  it is well-known that the Hausdorff dimension of $K$, denoted by   $\dim_{\rm H}K$, equals the similarity dimension of $\Phi$, i.e. the positive number $s$ satisfying $\sum_{i=1}^{\ell}r_i^s=1$, where $r_i\in(0,1)$ is the contraction ratio of $\phi_i$, $i=1,\ldots,\ell$. Also, the $s$-dimensional Hausdorff measure of $K$ satisfies that   $0<\mathcal{H}^s(K)\leq|K|^s$, here and afterwards for $A\subset \R^d$, $|A|$ stands for the diameter of $A$.   Moreover, it is known that
\begin{equation}\label{eq:mu and Hs}
\mathcal{H}^s|_K=\mathcal{H}^s(K)\mu,
\end{equation}
where 
$\mathcal{H}^s|_K$  stands for the restriction of the $s$-dimensional Hausdorff measure on  $K$,  $\mu$ is the self-similar measure generated by $\Phi$ and the probability vector $(r_1^s,\ldots, r_{\ell}^s)$.   We call $\mu$ the {\it natural self-similar measure} on $K$. See  \cite{hutchinson} for the  proofs of the above facts.

Nevertheless, given a self-similar set $K\subset \R^d$ satisfying the OSC with $\dim_{\rm H}K=s$, it is often challenging to determine the exact value of $\mathcal{H}^s(K)$. When $d=1$, this problem was first studied independently by Marion \cite{marion86,marion87} and  Ayer and Strichartz \cite{ayerstr}. They computed the exact value of $\mathcal{H}^s(K)$ under certain additional hypothesis. Their method is based on the relation \eqref{eq:mu and Hs} and the convex density theorem for Hausdorff measures (see \cite[Theorem 2.3]{Falconer85}).    When $d>1$ and $s>1$ is not an integer, despite  numerous studies,  not a single  example is known  for which the exact value of $\mathcal{H}^s(K)$ is computed; see \cite{DHL,DHLT,DRZ, ZhouWu} for related works.  In this case,  it is known that  $\mathcal{H}^s(K)<|K|^s$  if the convex hull of $K$ is a polytope (see \cite[Corollary 1.6]{HLZ05}). It remains an open question whether $\mathcal{H}^s(K)<|K|^s$ for any self-similar set $K\subset \R^d$ with similarity dimension $s>1$.  Regarding of this problem,  Zhiying Wen raised the following question in \cite{Wen}.

\begin{ques}
Let $s>1$ and $\epsilon\in(0,1)$. Does there exist a self-similar set $K$ with similarity dimension $s$  such that $\mathcal{H}^s(K)>(1-\epsilon)|K|^s$?
\footnote{It is known that for each $s\in (0,1]$, there exists a self-similar set $K\subset {\Bbb R}$ with similarity dimension $s$ such that $\mathcal{H}^s(K)=|K|^s$ (see e.g. \cite{marion86, ayerstr}).}
\end{ques}

In this paper, we give an affirmative  answer to the above  question  by proving the following.

\begin{thm}\label{mainthm}
For every $s\in (0,d)$ and $c\in (0,1)$, there exists a self-similar set $K\subset\R^d$ so that its generating IFS  satisfies the SSC,  $\dim_{\rm H}K=s$  and $\mathcal{H}^s(K)=c|K|^s$.
\end{thm}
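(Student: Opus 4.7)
Fix $s\in(0,d)$ and a target $c\in(0,1)$. My plan is to construct a continuous one-parameter family of similarity IFSs $\{\Phi_t\}_{t\in I}$ on $\R^d$, each satisfying SSC with similarity dimension $s$, so that the function
$f(t):=\mathcal{H}^s(K_t)/|K_t|^s$
is continuous on $I$ and its range contains values arbitrarily close to both $0$ and $1$. The intermediate value theorem then gives $t^*$ with $f(t^*)=c$, and $K_{t^*}$ satisfies the conclusion of the theorem.

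\textbf{The family.} Since $s<d$, we can choose $N$ large enough that $N$ pairwise disjoint closed balls of radius $r=N^{-1/s}$ fit inside the unit ball of $\R^d$ with positive gaps between them. With centres $a_1(t),\ldots,a_N(t)$ varying continuously in $t$, consider the equal-ratio IFS $\Phi_t=\{\phi_i^{(t)}(x)=rx+(1-r)a_i(t)\}_{i=1}^{N}$; each $\Phi_t$ has similarity dimension $s$ (because $Nr^s=1$) and satisfies SSC along a suitable path. The path is designed so that at $t=0$ the centres sit in a tight symmetric configuration chosen to make $f(0)$ close to $1$, while as $t\to\infty$ one centre is pushed to infinity along a fixed unit vector $v$: $a_N(t)=tv$, with the other centres pinned in the unit ball.

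\textbf{Endpoint behaviour and continuity of $f$.} For large $t$, the inclusions $\{a_i(t)\}\subset K_t$ and the contraction property yield $|K_t|=t+O(1)$. On the other hand, the natural measure $\mu_t$ concentrates mass at least $(N-1)/N$ on a bounded convex set $U_0$ containing the pinned centres $a_1(t),\ldots,a_{N-1}(t)$. Combining the identity $\mathcal{H}^s|_{K_t}=\mathcal{H}^s(K_t)\mu_t$ with the convex density theorem \cite[Theorem 2.3]{Falconer85} gives $\mathcal{H}^s(K_t)=\bigl(\sup_U\mu_t(U)/|U|^s\bigr)^{-1}$, where the supremum runs over convex sets meeting $K_t$; plugging in $U=U_0$ bounds the supremum below by a $t$-independent constant, so $\mathcal{H}^s(K_t)$ stays bounded and $f(t)\lesssim t^{-s}\to 0$. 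At $t=0$ the tight configuration is engineered to make the same supremum approach $1/|K_0|^s$: for $s\leq 1$ one embeds the classical Ayer--Strichartz $\R^1$ construction (with $c=1$) along a coordinate axis of $\R^d$ since the attractor of the induced IFS lies on that axis, while for $s\in[1,d)$ one arranges nearly-touching cells at the vertices of an appropriate symmetric polytope at the boundary of SSC. Continuity of $f$ then follows from Hausdorff-metric continuity of $K_t$, weak-$*$ continuity of $\mu_t$, and the fact that under SSC the supremum in the density formula is attained on a finite-generation cylinder $\phi_w^{(t)}(K_t)$ whose measure and diameter depend continuously on the IFS parameters.

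\textbf{Main obstacle.} The subtle step is the $f\to 1$ endpoint when $s\in[1,d)$: one must engineer a genuinely higher-dimensional configuration in which the convex-density supremum is uniformly close to $1/|K|^s$ while SSC still holds and the similarity dimension remains $s$. By contrast, the $f\to 0$ direction is robust, since any asymmetric elongation of a single contraction breaks the scale-invariance of $f$ and drives the ratio to zero. A secondary technical point is justifying that the supremum in the density formula is realised on a cylinder set of bounded generation as $t$ varies; under SSC this is standard but needs to be stated with uniform constants across the family to ensure continuity of $f$.
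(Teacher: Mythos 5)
Your overall strategy --- a continuous one-parameter family of equal-ratio SSC systems with similarity dimension $s$, endpoint estimates $f\to 1$ and $f\to 0$, and the intermediate value theorem --- is exactly the paper's. The $f\to 0$ endpoint is also essentially sound (the paper keeps $|K_t|=1$ by pinning two maps at antipodal points of a ball and collapsing the remaining $\ell-2$ maps into a tiny ball, rather than sending a centre to infinity, but either version gives a large convex-density ratio). The genuine gap is the $f\to 1$ endpoint for $s\in[1,d)$, which you correctly flag as the main obstacle but do not resolve: ``arranges nearly-touching cells at the vertices of an appropriate symmetric polytope'' is not an argument, and is in fact a suspicious choice, since it is known that $\mathcal{H}^s(K)<|K|^s$ whenever $s>1$ and the convex hull of $K$ is a polytope (\cite[Corollary 1.6]{HLZ05}). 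The paper's resolution is a lattice-plus-isodiametric argument: place the $\ell$ fixed points at $F=B(0,1/2)\cap(\Z^d/(2n))$ with common ratio $r=\ell^{-1/s}$, so each piece carries $\mu_0$-mass $\ell^{-1}$ and sits inside the cube $Q(b_i,1/(4n))$. For any compact convex $U$ realising the density supremum one shows $|U|\geq 1/(4n)$, the number $m$ of lattice cubes meeting $U$ satisfies $m(2n)^{-d}\leq \mathcal{L}^d(\overline{\mathbf{V}}_{\sqrt d/(2n)}(U))\leq \omega_d 2^{-d}(|U|+\sqrt d/n)^d$ by the isodiametric inequality, and $\ell\geq\omega_d(n-\sqrt d/2)^d$ by a volume count; combining these gives $\mu_0(U)/|U|^s\leq m/(\ell|U|^s)<1+\epsilon$ uniformly over the admissible range of $|U|$ once $n$ is large (this is Lemma \ref{lem2}). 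Without some such quantitative covering estimate your endpoint claim is unsupported, and it is the heart of the theorem.

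A secondary but real error is in your continuity argument: the supremum in the convex density formula runs over convex sets, and it is \emph{not} attained on a finite-generation cylinder $\phi_w^{(t)}(K_t)$ (cylinders need not be convex, and the maximising convex set has no reason to be a cylinder), so the mechanism you propose for continuity of $t\mapsto\mathcal{H}^s(K_t)$ does not work as stated. The clean route, which the paper takes, is to quote Olsen's theorem \cite[Theorem 1.2]{olsen} on the continuity of $\mathcal{H}^s(K)$ in the defining data of an IFS satisfying the SSC; with that citation, plus the normalisation $|K_t|\equiv 1$ (which your moving-to-infinity family does not have, forcing you to also prove continuity of $t\mapsto|K_t|$, easy but worth noting), the intermediate value step goes through.
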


 Our strategy of the proof is as follows: given $s\in (0,d)$ and $\epsilon\in(0,1)$, we construct a  family of IFSs $\{\Phi_t\}_{t\in [0,1]}$ with the corresponding self-similar sets $\{K_t\}_{t\in [0,1]}$ such that:  (1) for each $t\in [0,1]$, $\Phi_t$ satisfies the SSC, $\dim_{\rm H}K_t=s$ and $|K_t|=1$; (2) the mapping $t\mapsto \mathcal{H}^s(K_t)$ is continuous; (3) $\mathcal{H}^s(K_0)>1-\epsilon$;  (4) $\mathcal{H}^s(K_1)<\epsilon$. See Proposition \ref{mainprop} for the details. A key part is the proof of (3), in which  we apply the isodiametric inequality (see Lemma \ref{lemiso}). Then Theorem \ref{mainthm} follows from the above  result and  a result in \cite{olsen} about the continuity of Hausdorff measures of self-similar sets satisfying the  SSC with respect to the defining data of the IFSs.

\setcounter{equation}{0}

\section{Proof of  Theorem \ref{mainthm}}

Our proof of Theorem \ref{mainthm} is based  on the following result.
\begin{lem}\label{lem:density}\cite{olsen2}
Let $K\subset \R^d$ be a self-similar set generated by an IFS $\Phi=\{\phi_i\}_{i=1}^{\ell}$ which satisfies the SSC with $\dim_{\rm H}K=s$. Let $\mu$ be the natural self-similar measure on $K$. Set $\Delta=\min_{i\neq j}{\rm dist}(\phi_i(K), \phi_j(K))$. Then we have
\begin{align}
\mathcal{H}^s(K)^{-1}&=\max\left\{\frac{\mu(U)}{|U|^s}:  U\subset \R^d\text{ is compact and  convex}\right\} \label{eqdensity1} \\
&=\max\left\{\frac{\mu(U)}{|U|^s}:  U\subset \R^d\text{ is compact and  convex with } |U|\geq \Delta\right\}. \label{eqdensity2}
\end{align}
In particular, the above maximums are both attained.
\end{lem}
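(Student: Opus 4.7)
My plan is to split the statement into three pieces: (A) $\mu(U)/|U|^s \leq 1/\mathcal{H}^s(K)$ for every compact convex $U$; (B) this bound is sharp, so the supremum equals $1/\mathcal{H}^s(K)$; (C) the supremum is attained, and in fact by some $U$ with $|U| \geq \Delta$. Via \eqref{eq:mu and Hs}, (A) is equivalent to $\mathcal{H}^s(K \cap U) \leq |U|^s$.

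For (A), I argue by contradiction. Suppose a compact convex $U_0$ satisfies $\mathcal{H}^s(K \cap U_0) > |U_0|^s$. Using the scaling law $\mathcal{H}^s(\phi_{\mathbf{i}}(E)) = r_{\mathbf{i}}^s \mathcal{H}^s(E)$ and the inclusion $\phi_{\mathbf{i}}(K \cap U_0) \subseteq K \cap \phi_{\mathbf{i}}(U_0)$, the compact convex set $V_{\mathbf{i}} := \phi_{\mathbf{i}}(U_0)$ inherits $\mathcal{H}^s(K \cap V_{\mathbf{i}}) > |V_{\mathbf{i}}|^s$ for every finite word $\mathbf{i}$. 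Encoding $K$ by the natural projection $\pi\colon \{1, \ldots, \ell\}^{\N} \to K$, the measure $\mu$ pulls back to the ergodic Bernoulli measure with weights $(r_i^s)_{i=1}^{\ell}$. Birkhoff's ergodic theorem applied to $\mathbf{1}_{\pi^{-1}(U_0)}$ then ensures that for $\mu$-a.e.\ $x \in K$, the initial length-$n$ codings $\mathbf{i}_n$ of $x$ give $x \in V_{\mathbf{i}_n}$ for infinitely many $n$, with $|V_{\mathbf{i}_n}| \to 0$. Hence the upper convex density of $K$ exceeds $1$ on a set of positive $\mathcal{H}^s$-measure (since $\mu \sim \mathcal{H}^s|_K$), contradicting the convex density theorem \cite[Theorem 2.3]{Falconer85}.

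For (B), the same density theorem delivers a matching lower bound: near any density-$1$ point of $K$ (and $\mathcal{H}^s$-a.e.\ point qualifies), there are compact convex $U$ with $\mathcal{H}^s(K \cap U)/|U|^s \to 1$, so $\mu(U)/|U|^s \to 1/\mathcal{H}^s(K)$. For (C), if $|U| < \Delta$ and $\mu(U) > 0$, the definition of $\Delta$ forces $U \cap K \subseteq \phi_i(K)$ for a unique $i$; setting $U' := \phi_i^{-1}(U)$, the SSC together with the self-similarity of $\mu$ gives $\mu(U')/|U'|^s = \mu(U)/|U|^s$ with $|U'| = |U|/r_i$, and finitely many such blow-ups yield a convex set of diameter at least $\Delta$ with the same ratio. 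Since $\mu(U)/|U|^s \leq |K|^{-s}$ whenever $|U| \geq |K|$, optimization reduces to the family $\mathcal{F} = \{U : \text{compact convex},\ \Delta \leq |U| \leq |K|,\ U \cap K \neq \emptyset\}$, which is compact in the Hausdorff metric by Blaschke's selection theorem. The functional $U \mapsto \mu(U)/|U|^s$ is upper semicontinuous on $\mathcal{F}$ ($\mu$ is Radon and the diameter is continuous), so the supremum is attained.

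The main obstacle is step (A): the ergodic-theoretic step ensuring that the shrinking sets $\phi_{\mathbf{i}_n}(U_0)$ actually contain $x$ (not merely approach it) for $\mu$-a.e.\ $x$. This return-time property, delivered by Birkhoff's theorem for the Bernoulli pullback of $\mu$, is exactly what propagates the hypothetical density excess at a single $U_0$ into an almost-everywhere violation of the convex density theorem.
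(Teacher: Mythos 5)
Your argument is correct, but it takes a genuinely different (and more self-contained) route than the paper. The paper does not prove the density formula at all: it quotes \cite[Corollaries 1.5--1.6]{olsen2}, which give $\mathcal{H}^s(K)^{-1}$ as a \emph{supremum} over \emph{open} convex sets (with and without the constraint $|U|\geq\Delta$), and then invokes ``a standard compactness argument'' to replace open by compact and suprema by attained maxima. Your part (C) is essentially that unspoken compactness argument, made explicit: the blow-up $U\mapsto\phi_i^{-1}(U)$ preserving $\mu(U)/|U|^s$ when $|U|<\Delta$, the reduction to $\Delta\leq|U|\leq|K|$ via the convex hull of $K$, Blaschke selection, and upper semicontinuity of $U\mapsto\mu(U)$ along Hausdorff-convergent sequences of compact sets. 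Your parts (A) and (B), by contrast, reprove Olsen's corollaries from scratch: the upper bound via the scaling law, the Birkhoff/return-time argument propagating a single convex set with $\mathcal{H}^s(K\cap U_0)>|U_0|^s$ into an a.e.\ violation of Falconer's convex density theorem, and the matching lower bound from the density being exactly $1$ a.e. This is in substance the classical proof (it is how Marion, Ayer--Strichartz and Olsen establish these density theorems), so what you gain is independence from \cite{olsen2} at the cost of invoking \cite[Theorem 2.3]{Falconer85} and ergodicity of the Bernoulli coding; the identity $\mu(U)=r_i^s\mu(\phi_i^{-1}(U))$ in your step (C) does correctly use the SSC to kill the terms $j\neq i$ in the self-similarity relation. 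Two cosmetic points you may wish to pin down: degenerate $U$ (empty sets or singletons, where $|U|^s=0$) should be excluded or handled by noting that $\mu$ is non-atomic, and in (B) one should pass from the convex sets supplied by the density theorem to their closures, which changes neither $|U|$ nor decreases $\mu(U)$.
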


We remark that when $d=1$, Lemma \ref{lem:density} was proved earlier in \cite{marion86,stz,ayerstr}, where it was   used  to compute  Hausdorff measures of self-similar sets in $\R$ under certain additional hypothesis. Moreover, Lemma \ref{lem:density} is not explicitly stated in \cite{olsen2}, but it can be easily deduced from the results of \cite{olsen2}. Indeed, it was proved in  \cite[Corollaries 1.5-1.6]{olsen2}  that
\begin{align*}
\mathcal{H}^s(K)^{-1}&=\sup\left\{\frac{\mu(U)}{|U|^s}:  U\subset \R^d\text{ is open  and  convex}\right\}\\
&=\sup\left\{\frac{\mu(U)}{|U|^s}:  U\subset \R^d\text{ is open and  convex with } |U|\geq \Delta\right\}.
\end{align*}
Then Lemma \ref{lem:density}  follows from the above equalities together with  a standard compactness argument.

Based on Lemma \ref{lem:density}, we establish the following result, which is  an essential part in  our proof of Theorem \ref{mainthm}.

\begin{prop}\label{mainprop}
For every $s\in (0,d)$ and $\epsilon\in (0,1)$,  there exist $r\in (0,1)$, $\ell\in \N$ and  a family of IFSs $\Phi_t=\{\phi_{t,i}(x)=rx+a_i(t)\}_{i=1}^{\ell}$ ($t\in [0,1]$) on $\R^d$ with  $a_1,\ldots, a_{\ell}: [0,1]\to \R^d$ being continuous functions such that the following statements hold:

{\rm (i)} For each $t\in [0,1]$, $\Phi_t$ satisfies the SSC and  its attractor, denoted as $K_t$, has dimension $s$ and diameter $1$.

{\rm (ii)} $\mathcal{H}^s(K_0)>1-\epsilon$ and $\mathcal{H}^s(K_1)<\epsilon$.
\end{prop}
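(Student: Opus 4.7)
The plan is to choose a large integer $\ell$, set $r := \ell^{-1/s}$, and construct a continuous family of IFSs $\Phi_t = \{rx + a_i(t)\}_{i=1}^{\ell}$ that interpolates between a uniformly spread and a tightly clustered configuration. Two translations will be anchored: $a_1(t) \equiv -\frac{1-r}{2}e_1$ and $a_{\ell}(t) \equiv \frac{1-r}{2}e_1$ for every $t$. Their fixed points $\mp \frac{1}{2}e_1$ lie in $K_t$, so $|K_t| \ge 1$; meanwhile requiring every $a_i(t) \in \overline{B(0,(1-r)/2)}$ confines $K_t \subset \overline{B(0,1/2)}$ and gives $|K_t| \le 1$. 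Place the remaining centers $a_2(0), \dots, a_{\ell-1}(0)$ on a uniform lattice of spacing $\Lambda \sim v_d^{1/d}/\ell^{1/d}$ inside $\overline{B(0,(1-r)/2)}$; since $\Lambda \gg r$ for large $\ell$, the balls $\overline{B(a_i(0),r/2)}$ are pairwise disjoint, yielding SSC. Interpolate by isotropic scaling toward the origin: $a_i(t) = \lambda(t) a_i(0)$ for $i=2,\dots,\ell-1$, with $\lambda(t) = 1 - t(1 - \lambda_1)$ and $\lambda_1 = 2r/\Lambda$, so that pairwise distances among the moving centers stay $> r$ and distances from the anchors to any moving center only increase as $t$ grows. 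This yields property (i).

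For property (ii) at $t=0$, by Lemma~\ref{lem:density} it suffices to prove $\mu_0(U)/|U|^s < (1-\epsilon)^{-1}$ for every compact convex $U$ with $|U|\ge\Delta_0$. When $U$ intersects only one level-one cylinder the ratio is preserved under the corresponding $\phi_{0,i}^{-1}$; iterating, we may assume $U$ meets at least two cylinders, so $|U| \gtrsim \Lambda$. Bound $\mu_0(U) \le N/\ell$ where $N = \#\{i: \overline{B(a_i(0),r/2)} \cap U \ne \emptyset\}$. Using the uniform lattice density $\ell/(v_d(1-r)^d)$ inside the container and the isodiametric inequality $\mathrm{vol}(U+\overline{B(0,r/2)}) \le v_d(|U|+r)^d$, one obtains
\[
\frac{\mu_0(U)}{|U|^s} \le \frac{(|U|+r)^d}{(1-r)^d |U|^s}(1+o(1)) = \frac{|U|^{d-s}(1+r/|U|)^d}{(1-r)^d}(1+o(1)).
\]
Since $|U|\le 1$ (effectively, after trimming to $U\cap \overline{B(0,1/2)}$ in the counting) gives $|U|^{d-s}\le 1$, and $|U|\gtrsim \Lambda\gg r$ gives $(1+r/|U|)^d\to 1$, the whole bound tends to $1$ as $\ell\to\infty$, so $\ell$ large enough forces the claim. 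For $t=1$, take $U$ to be a slightly enlarged ball containing the entire cluster of moving pieces; then $\mu_1(U)\ge (\ell-2)/\ell$ while $|U|^s\sim \lambda_1^s \sim \ell^{s/d-1}$, giving $\mu_1(U)/|U|^s \gtrsim \ell^{1-s/d}\to\infty$ and hence $\mathcal{H}^s(K_1)\lesssim \ell^{s/d-1}<\epsilon$.

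The main obstacle is the counting bound at $t=0$. A crude packing argument based only on disjointness plus the isodiametric inequality would yield $N\le((|U|+2r)/r)^d$, which is far too loose when $|U|\sim 1$ because it ignores the very small packing density $\ell r^d = r^{d-s}$. Replacing it with the lattice-counting estimate above, with uniform control of the boundary error as $\ell\to\infty$, and carrying out the self-similarity reduction so that one only needs to handle $U$ in the macroscopic range $[\Delta_0, 1]$, together constitute the technical crux of the proof. A secondary concern is making the interpolation preserve SSC for every intermediate $t$, not just at the endpoints; the global scaling construction is specifically designed so that moving centers stay $>r$ apart throughout and never approach the anchors.
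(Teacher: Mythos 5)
Your construction and overall strategy coincide with the paper's: the same $r=\ell^{-1/s}$ with $\ell$ the number of lattice points in a ball, two anchored similitudes whose fixed points pin the diameter at $1$, an isotropic contraction of the remaining fixed points toward the origin as the interpolation, Lemma~\ref{lem:density} used in both directions, and the isodiametric inequality for the lower bound at $t=0$. The $t=1$ estimate and the reduction to $|U|\ge\Delta$ (which you re-derive via a cylinder argument instead of quoting the second form of the density lemma) are both sound.

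However, the step you yourself single out as the crux --- the counting bound at $t=0$ --- has a genuine gap. To bound the number $N$ of lattice points within distance $r/2$ of $U$ by (lattice density) times (a volume), you must enlarge $U$ not by $r/2$ but by the diameter of a lattice cell, which is of order $\Lambda\sqrt d$ with $\Lambda\sim\ell^{-1/d}$ the spacing; the inequality $N\le(\text{density})\cdot\mathcal{L}^d\bigl(U+\overline{B(0,r/2)}\bigr)$ is false in general (for $d\ge2$, a needle-shaped convex $U$ of length $1$ along a lattice line meets $\sim\Lambda^{-1}=\ell^{1/d}$ lattice points, while the density times the volume of $U+\overline{B(0,r/2)}$ is only $\sim\ell^{1-(d-1)/s}\ll\ell^{1/d}$ when $s<d$). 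With the correct enlargement the relevant factor becomes $\bigl(1+c\Lambda\sqrt d/|U|\bigr)^d$, and since the density lemma only guarantees $|U|\ge\Delta\sim\Lambda$, this factor is bounded but does \emph{not} tend to $1$; your assertion that ``the whole bound tends to $1$ as $\ell\to\infty$'' therefore fails precisely in the regime where $|U|$ is comparable to the lattice spacing, because there you only use $|U|^{d-s}\le1$. The paper closes exactly this gap with Lemma~\ref{lem2}: when $|U|$ is of order $\Lambda$ one uses $|U|^{d-s}\lesssim\Lambda^{d-s}\to0$ (so $s<d$ is essential) to absorb the dimensional constant, while for $|U|\gg\Lambda$ the factor is indeed $1+o(1)$. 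Your argument is repairable by precisely this case distinction, but as written the key estimate does not hold uniformly over the required range of $|U|$.
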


To prove Proposition \ref{mainprop}, we first give an elementary lemma.

\begin{lem}\label{lem2}
Let $0<s<d$ and $\epsilon>0$.  Then there exists $N>0$ such that  for all  $n\geq N$ and all   $x\in [\frac{1}{4n},1]$,
\[\frac{\left(x+\frac{\sqrt{d}}{n}\right)^d}{\left(1-\frac{\sqrt{d}}{2n}\right)^dx^s}<1+\epsilon.\]
\end{lem}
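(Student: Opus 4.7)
The plan is to bound the given expression by splitting the interval $[1/(4n),1]$ into two pieces around a threshold $M/n$, where $M \geq 1$ will be a constant chosen depending only on $\epsilon$, $d$, $s$. The motivating rewrite is
\[
\frac{(x + \sqrt{d}/n)^d}{(1-\sqrt{d}/(2n))^d\, x^s} \;=\; \frac{x^{d-s}}{(1-\sqrt{d}/(2n))^d}\left(1+\frac{\sqrt{d}}{nx}\right)^d,
\]
which displays two competing effects: the factor $x^{d-s}$ is small when $x$ is small (using $d-s>0$), while $(1+\sqrt{d}/(nx))^d$ is close to $1$ when $nx$ is large. Splitting according to the size of $nx$ lets each effect be exploited in its favourable range.

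On the large piece $x\in[M/n,1]$ I would use $x^{d-s}\leq 1$ together with the monotone bound $(1+\sqrt{d}/(nx))^d\leq (1+\sqrt{d}/M)^d$. The strategy is to first choose $M$ large enough that $(1+\sqrt{d}/M)^d<1+\epsilon/3$, and then pick $N_1$ large enough that $(1-\sqrt{d}/(2n))^{-d}<1+\epsilon/3$ for all $n\geq N_1$. The whole expression is then at most $(1+\epsilon/3)^2$, which is less than $1+\epsilon$ (after an obvious adjustment of the fractions if $\epsilon$ is large).

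On the small piece $x\in[1/(4n),M/n]$ I would abandon the factored form and bound directly: $(x+\sqrt{d}/n)^d\leq ((M+\sqrt{d})/n)^d$ and $x^s\geq (4n)^{-s}$. This produces an upper bound of the form $C\,n^{s-d}/(1-\sqrt{d}/(2n))^d$ with $C=(M+\sqrt{d})^d\,4^s$. Since $s<d$ this tends to $0$ as $n\to\infty$, so there is $N_2$ for which the bound is $<1+\epsilon$ for all $n\geq N_2$; take $N=\max(N_1,N_2)$. The argument is entirely elementary; the only point requiring care is the order of quantifiers, namely that $M$ must be fixed first (depending only on $\epsilon,d,s$) before $N$ is chosen (depending on $M$, $\epsilon$, $d$, $s$). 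I do not expect any genuine obstacle.
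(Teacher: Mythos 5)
Your proof is correct and follows essentially the same route as the paper: the authors also split at a threshold $x = y_0/n$ (your $M/n$), use $x^{d-s}\le 1$ together with $(1+\sqrt{d}/(nx))^d/(1-\sqrt{d}/(nx))^d \to 1$ on the large piece, and on the small piece derive a bound of the form $C\,n^{s-d}\to 0$ exactly as you do. The only cosmetic difference is that the paper absorbs the factor $(1-\sqrt{d}/(2n))^{-d}$ into the large-piece ratio via $nx<2n$ rather than estimating it separately by $1+\epsilon/3$.
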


\begin{proof}
Let $y_0>0$ be large enough such that for all $y\geq y_0$,
\begin{equation}\label{eq1e1}
\frac{\left(1+\frac{\sqrt{d}}{y}\right)^d}{\left(1-\frac{\sqrt{d}}{y}\right)^d}<1+\epsilon.
\end{equation}
Then take $N>0$  large enough such that  for all $n\geq N$,
\begin{equation}\label{eq1e2}
\frac{\left(1+4\sqrt{d}\right)^d}{\left(1-\frac{\sqrt{d}}{2n}\right)^d}\left(\frac{y_0}{n}\right)^{d-s}<1+\epsilon.
\end{equation}
Notice that such $N$ exists since $s<d$. Let $n\geq N$ and $x\in [\frac{1}{4n},1]$. If $nx\geq y_0$, then $2n>nx\geq y_0$ and so by \eqref{eq1e1},
\[\frac{\left(x+\frac{\sqrt{d}}{n}\right)^d}{\left(1-\frac{\sqrt{d}}{2n}\right)^dx^s}=\frac{\left(1+\frac{\sqrt{d}}{nx}\right)^d}{\left(1-\frac{\sqrt{d}}{2n}\right)^d}x^{d-s}\leq \frac{\left(1+\frac{\sqrt{d}}{nx}\right)^d}{\left(1-\frac{\sqrt{d}}{nx}\right)^d}x^{d-s}<1+\epsilon.\]
If $nx<y_0$, then
\begin{align*}
\frac{\left(x+\frac{\sqrt{d}}{n}\right)^d}{\left(1-\frac{\sqrt{d}}{2n}\right)^dx^s}&\leq \frac{\left(x+4\sqrt{d}x\right)^d}{\left(1-\frac{\sqrt{d}}{2n}\right)^dx^s} \quad \quad \quad\quad (\text{since } \frac{1}{n}\leq 4x)\\
&= \frac{\left(1+4\sqrt{d}\right)^d}{\left(1-\frac{\sqrt{d}}{2n}\right)^d}x^{d-s}\\
&\leq \frac{\left(1+4\sqrt{d}\right)^d}{\left(1-\frac{\sqrt{d}}{2n}\right)^d}\left(\frac{y_0}{n}\right)^{d-s} \quad (\text{since }nx<y_0)\\
&<1+\epsilon   \ \ \qquad\qquad\qquad\quad\qquad(\text{by }\eqref{eq1e2}).
\end{align*}
This completes the proof of the lemma.
\end{proof}

Let $\mathcal{L}^d$ denote the $d$-dimensional Lebesgue measure on $\R^d$. The following standard isodiametric inequality plays a key role in our proof of Proposition \ref{mainprop}.
\begin{lem}\cite[Theorem 2.4]{eg}\label{lemiso}
For every Lebesgue measurable set $A\subset \R^d$,
\[\mathcal{L}^d(A)\leq \omega_d2^{-d}|A|^d,\]
where $\omega_d$ denotes the Lebesgue measure of a unit ball in $\R^d$.
\end{lem}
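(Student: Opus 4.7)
The plan is to prove the isodiametric inequality by Steiner symmetrization. By inner regularity of $\mathcal{L}^d$, it suffices to prove the inequality when $A$ is compact, since replacing $A$ by a compact subset can only decrease the diameter. For each $i\in\{1,\ldots,d\}$ I define the Steiner symmetrization $S_i$ with respect to the hyperplane $H_i=\{x_i=0\}$: for every $y\in H_i$, replace the section $A\cap(y+\R e_i)$ by the closed interval on $y+\R e_i$ of the same one-dimensional Lebesgue measure, centered at $y$ (taking $\emptyset$ if the section has measure zero). I will show that $S_i$ preserves $\mathcal{L}^d$ (immediate from Fubini) and does not increase the diameter; iterating and invoking central symmetry of the end product will then close the argument.

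The key step is the diameter bound $|S_i(A)|\leq|A|$. Take $p,q\in S_i(A)$ and write $p=y+t_pe_i$, $q=z+t_qe_i$ with $y,z\in H_i$ and, say, $t_p\geq t_q$. Let $\alpha,\beta$ be the one-dimensional measures of $A\cap(y+\R e_i)$ and $A\cap(z+\R e_i)$; by construction $t_p\leq\alpha/2$ and $t_q\geq-\beta/2$, so $t_p-t_q\leq(\alpha+\beta)/2$. Since $A$ is compact, the two sections have extremal points $a_1\geq a_2$ and $b_1\geq b_2$ along the $e_i$-direction belonging to $A$, with $a_1-a_2\geq\alpha$ and $b_1-b_2\geq\beta$. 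Then
\[
(a_1-b_2)+(b_1-a_2)=(a_1-a_2)+(b_1-b_2)\geq\alpha+\beta,
\]
so one of $a_1-b_2$, $b_1-a_2$ is at least $(\alpha+\beta)/2\geq t_p-t_q\geq 0$. The corresponding pair of points of $A$ therefore has Euclidean distance at least $\sqrt{|y-z|^2+(t_p-t_q)^2}=|p-q|$, proving $|p-q|\leq|A|$.

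Now set $A^{\ast}=S_d\circ\cdots\circ S_1(A)$. By construction $S_i(A)$ is symmetric under the reflection $x_i\mapsto -x_i$, and a slice-by-slice check shows that $S_j$ with $j\neq i$ preserves this symmetry (the $e_j$-sections through $y$ and through the reflection of $y$ across $H_i$ carry the same $\mathcal{L}^1$-measure, hence get replaced by reflection-conjugate symmetric intervals). Consequently $A^{\ast}$ is symmetric about every coordinate hyperplane, hence centrally symmetric. For any $x\in A^{\ast}$ we also have $-x\in A^{\ast}$, so $2|x|\leq|A^{\ast}|\leq|A|$, placing $A^{\ast}$ inside the closed ball $\overline{B}(0,|A|/2)$; thus
\[
\mathcal{L}^d(A)=\mathcal{L}^d(A^{\ast})\leq \omega_d(|A|/2)^d=\omega_d 2^{-d}|A|^d.
\]

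The main obstacle is the diameter estimate $|S_i(A)|\leq|A|$ carried out above; the Lebesgue measurability of $S_i(A)$ for compact $A$ is a standard Fubini-based point (using upper semicontinuity of section measure on compact sets) that I would record as a separate auxiliary lemma.
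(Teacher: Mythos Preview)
Your argument is correct: the Steiner symmetrization proof you outline is the standard one, and your diameter estimate $|S_i(A)|\le|A|$ is carried out carefully (the pigeonhole on $(a_1-b_2)+(b_1-a_2)\ge\alpha+\beta$ indeed yields a nonnegative term dominating $t_p-t_q$, so the squared comparison goes through). The preservation of the reflection symmetry $x_i\mapsto-x_i$ under $S_j$ for $j\ne i$, and the reduction to compact $A$ via inner regularity, are also handled properly.

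As for comparison with the paper: the paper does not prove this lemma at all---it simply quotes it from Evans--Gariepy \cite[Theorem~2.4]{eg}. Your proof is in fact essentially the one given there, so there is no methodological divergence to discuss.
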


For $x=(x_1,\ldots,x_d)\in \R^d$ and $\delta>0$, let  $B(x,\delta)$ be the closed ball in $\R^d$ centered at $x$ of radius $\delta$, and let $Q(x,\delta)$ denote the cube $\prod_{i=1}^d[x_i-\delta,x_i+\delta]$.  For $A\subset \R^d$,  let $\#A$ be  the cardinality of $A$.

\begin{proof}[Proof of Proposition \ref{mainprop}]
Fix $s\in (0,d)$ and $\epsilon\in (0,1)$.  We are going to construct the  self-similar sets $K_t$ $(t\in [0,1])$ in the ball $B:=B(0,1/2)$. For this purpose, let $N$ be as in Lemma \ref{lem2}. Pick a positive integer $n\geq N$ so that
\begin{equation}\label{eq:n}
\frac{\omega_d\left(n-\frac{\sqrt{d}}{2}\right)^d-2}{(8n+4)^s}>\epsilon^{-1}.\end{equation}
  Set $F=B\cap (\Z^d/(2n)).$ Let $\ell=\#F$ and $b_1,\ldots, b_{\ell}$ be the different elements of $F$ with  $$b_1=(-1/2,0,\ldots, 0),\quad  b_2=(1/2,0,\ldots, 0).$$
   Then by  a simple volume argument (see e.g. \cite[p. 17]{Kratzel}),
\begin{equation}\label{eq:bdd ln}
\ell\geq \omega_d\left(n-\frac{\sqrt{d}}{2}\right)^d.
\end{equation}
Let $r=\ell^{-1/s}$. Then by \eqref{eq:n}-\eqref{eq:bdd ln},
 \begin{equation}\label{eq:rn}
 (8n+4)r=(8n+4)\ell^{-1/s}\leq (8n+4)\omega_d^{-1/s}\left(n-\frac{\sqrt{d}}{2}\right)^{-d/s}<\epsilon^{1/s}<1.
 \end{equation}
 In particular, $8nr<1$.

For $t\in [0,1]$,  let $\Phi_t=\{\phi_{t,i}(x)=rx+a_i(t)\}_{i=1}^{\ell}$, where
\[a_1(t)=(1-r)b_1,\; a_2(t)=(1-r)b_2,\; a_{i}(t)=(1-r)(8nr)^tb_i \quad \text{ for }  i\in \{3,\ldots,\ell\}.\] Let $K_t$ be the  attractor of $\Phi_t$. Notice that  $a_1,\ldots, a_{\ell}:[0,1]\to\R^d$ are continuous functions. Clearly,  for each $1\leq i\leq \ell$, the fixed point of $\phi_{t,i}$ is $a_i(t)/(1-r)$. In particular, $b_1, b_2$ are the fixed points of  $\phi_{t,1}$ and $\phi_{t,2}$,  respectively. Hence $b_1,b_2\in K_t$.     Since $8nr<1$,  it is not difficult to check   that for each $t\in [0,1]$, $\phi_{t,i}(B)$ ($i=1,\ldots, \ell$) are pairwise disjoint and contained in $B$. This implies that $K_t\subset B$ (see e.g. \cite{falconer}) and so $|K_t|\leq |B|=1$, and $\Phi_t$ satisfies the SSC with $\dim_{\rm H}K_t=s$. Since   $b_1, b_2\in K_t$,  $|K_t|\geq\|b_1-b_2\|=1.$ Hence  $|K_t|=1$. This proves the part (i) of the proposition. In the following we  prove that  $\mathcal{H}^s(K_0)>1-\epsilon$ and $\mathcal{H}^s(K_1)<\epsilon$.

 For $t\in\{0,1\}$,  let $\mu_t$ be the natural self-similar measure on $K_t$. That is, $\mu_t$ is the unique Borel probability measure supported on $K_t$ satisfying
\begin{equation}\label{eq:similar id}
\mu_t=\sum_{i=1}^{\ell}r^s\mu_{t}\circ(\phi_{t,i})^{-1}.
\end{equation}
We first show that  $\mathcal{H}^s(K_0)>1-\epsilon$. Recall that $\Phi_0=\{\phi_{0, i}(x)=rx+(1-r)b_i\}_{i=1}^{\ell}$.
See Figure \ref{fig:1}(a) for an illustration of the locations of $b_1,\ldots, b_{\ell}$, which are the fixed points of the elements of $\Phi_0$. \begin{figure}[htbp]
\centering
\setlength{\abovecaptionskip}{0.cm}
\subfigure[The bold dots are the  elements of $F$  and also the fixed points of the similitudes  in $\Phi_0$]{\label{subfiga}
\begin{minipage}{7cm}
\centering
\begin{tikzpicture}[scale=5.2]
\draw (-1/3, -1/2)--(-1/3,1/2);
\draw (-1/6, -1/2)--(-1/6,1/2);
\draw[->] (0, -1/2-1/9)--(0,1/2+1/9);
\draw (1/6, -1/2)--(1/6,1/2);
\draw (1/3, -1/2)--(1/3,1/2);
\draw (-1/2, -1/3)--(1/2,-1/3);
\draw (-1/2, -1/6)--(1/2,-1/6);
\draw[->] (-1/2-1/9, 0)--(1/2+1/9,0);
\draw (-1/2, 1/6)--(1/2,1/6);
\draw (-1/2, 1/3)--(1/2,1/3);

\draw(0,0) circle (1/2);
\node[below,right] at (-0.02,-0.03) {$0$};
\node[below,right] at (1/2-0.01,-0.05) {$\frac{1}{2}$};
\node[above,right] at (1/2-0.01,0.04) {$b_1$};
\node[above,left] at (-1/2+0.01,0.04) {$b_2$};
\node[below,right] at (1/2+1/12,-0.03) {$x$};
\node[below,right] at (0,1/2+1/12) {$y$};

 \fill (0,0) circle [radius=0.25pt];
 \fill (-1/2,0) circle [radius=0.25pt];
 \fill (1/2,0) circle [radius=0.25pt];
 \fill (0,-1/2) circle [radius=0.25pt];
 \fill (0,1/2) circle [radius=0.25pt];
 \fill (-1/3,-1/3) circle [radius=0.25pt];
 \fill (-1/3,-1/6) circle [radius=0.25pt];
 \fill (-1/3,0) circle [radius=0.25pt];
 \fill (-1/3,1/6) circle [radius=0.25pt];
 \fill (-1/3,1/3) circle [radius=0.25pt];
\fill (-1/6,-2/6) circle [radius=0.25pt];
 \fill (-1/6,-1/6) circle [radius=0.25pt];
 \fill (-1/6,0) circle [radius=0.25pt];
 \fill (-1/6,1/6) circle [radius=0.25pt];
 \fill (-1/6,2/6) circle [radius=0.25pt];
\fill (0,-2/6) circle [radius=0.25pt];
 \fill (0,-1/6) circle [radius=0.25pt];
 \fill (0,1/6) circle [radius=0.25pt];
 \fill (0,2/6) circle [radius=0.25pt];
\fill (1/6,-2/6) circle [radius=0.25pt];
 \fill (1/6,-1/6) circle [radius=0.25pt];
 \fill (1/6,0) circle [radius=0.25pt];
 \fill (1/6,1/6) circle [radius=0.25pt];
 \fill (1/6,2/6) circle [radius=0.25pt];
\fill (2/6,-2/6) circle [radius=0.25pt];
 \fill (2/6,-1/6) circle [radius=0.25pt];
 \fill (2/6,0) circle [radius=0.25pt];
 \fill (2/6,1/6) circle [radius=0.25pt];
 \fill (2/6,2/6) circle [radius=0.25pt];
\end{tikzpicture}
\end{minipage}}\hspace{.2in}
\subfigure[The bold dots are the fixed points of the similitudes in $\Phi_1$]{\label{subfigb}
\begin{minipage}{7cm}
\centering
\begin{tikzpicture}[scale=5.2]
\draw[->] (-1/2-1/9, 0)--(1/2+1/9,0);
\draw[->] (0, -1/2-1/9)--(0,1/2+1/9);
\node[below,right] at (1/2+1/12,-0.03) {$x$};
\node[below,right] at (0,1/2+1/12) {$y$};
\draw(0,0) circle (1/2);

\draw(0,0) circle(1.6/8);
\draw(-1/8+1/24, -1/8)--(-1/8+1/24, 1/8);
\draw(-1/8+2/24, -1/8)--(-1/8+2/24, 1/8);
\draw(-1/8+3/24, -1/8)--(-1/8+3/24, 1/8);
\draw(-1/8+4/24, -1/8)--(-1/8+4/24, 1/8);
\draw(-1/8+5/24, -1/8)--(-1/8+5/24, 1/8);
\draw(-1/8, -1/8+1/24)--(1/8, -1/8+1/24);
\draw(-1/8, -1/8+2/24)--(1/8, -1/8+2/24);
\draw(-1/8, -1/8+3/24)--(1/8, -1/8+3/24);
\draw(-1/8, -1/8+4/24)--(1/8, -1/8+4/24);
\draw(-1/8, -1/8+5/24)--(1/8, -1/8+5/24);
\fill (0,0) circle [radius=0.25pt];
 \fill (-1/2,0) circle [radius=0.25pt];
 \fill (1/2,0) circle [radius=0.25pt];
 \fill (-1/8+1/24,-1/8+1/24) circle [radius=0.25pt];
 \fill (-1/8+1/24,-1/8+2/24) circle [radius=0.25pt];
 \fill (-1/8+1/24,-1/8+3/24) circle [radius=0.25pt];
 \fill (-1/8+1/24,-1/8+4/24) circle [radius=0.25pt];
 \fill (-1/8+1/24,-1/8+5/24) circle [radius=0.25pt];
\fill (-1/8+2/24,-1/8+1/24) circle [radius=0.25pt];
 \fill (-1/8+2/24,-1/8+2/24) circle [radius=0.25pt];
 \fill (-1/8+2/24,-1/8+3/24) circle [radius=0.25pt];
 \fill (-1/8+2/24,-1/8+4/24) circle [radius=0.25pt];
 \fill (-1/8+2/24,-1/8+5/24) circle [radius=0.25pt];

 \fill (-1/8+3/24,-1/8) circle [radius=0.25pt];
\fill (-1/8+3/24,-1/8+1/24) circle [radius=0.25pt];
 \fill (-1/8+3/24,-1/8+2/24) circle [radius=0.25pt];
 \fill (-1/8+3/24,-1/8+3/24) circle [radius=0.25pt];
 \fill (-1/8+3/24,-1/8+4/24) circle [radius=0.25pt];
 \fill (-1/8+3/24,-1/8+5/24) circle [radius=0.25pt];
  \fill (-1/8+3/24,-1/8+6/24) circle [radius=0.25pt];

\fill (-1/8+4/24,-1/8+1/24) circle [radius=0.25pt];
 \fill (-1/8+4/24,-1/8+2/24) circle [radius=0.25pt];
 \fill (-1/8+4/24,-1/8+3/24) circle [radius=0.25pt];
 \fill (-1/8+4/24,-1/8+4/24) circle [radius=0.25pt];
 \fill (-1/8+4/24,-1/8+5/24) circle [radius=0.25pt];

\fill (-1/8+5/24,-1/8+1/24) circle [radius=0.25pt];
 \fill (-1/8+5/24,-1/8+2/24) circle [radius=0.25pt];
 \fill (-1/8+5/24,-1/8+3/24) circle [radius=0.25pt];
 \fill (-1/8+5/24,-1/8+4/24) circle [radius=0.25pt];
 \fill (-1/8+5/24,-1/8+5/24) circle [radius=0.25pt];

\node[left] at (0.327-1/2,0.1) {$V$};
\node[below,right] at (1/2-0.01,-0.05) {$\frac{1}{2}$};
\node[above,right] at (1/2-0.01,0.04) {$b_1$};
\node[above,left] at (-1/2+0.01,0.04) {$b_2$};

\node at (1/50,1/50) {{\tiny $0$}};
\end{tikzpicture}
\end{minipage}}
\caption{$\Phi_0$ and $\Phi_1$, $n=3$}
\label{fig:1}
\end{figure}
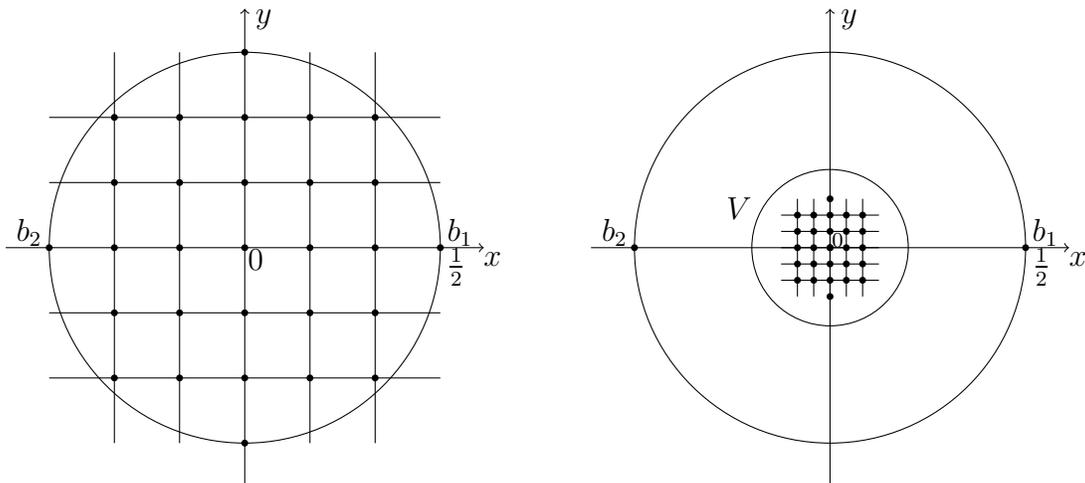
 Since $|K_0|=1$ and $8nr<1$, we see that for $i\in\{1,\ldots,\ell\}$,  $\phi_{0,i}(K_0)$ is contained in the interior of $Q(b_i, 1/(4n))$. In particular, $Q(b_i, 1/(4n))\cap \phi_{0,j}(K_0)=\emptyset$ for any $i,j\in\{1,\ldots,\ell\}$ with $i\neq j$.    Hence by  \eqref{eq:similar id},
\begin{equation}\label{eq:mu on Q}
\mu_0(Q(b_i, 1/(4n)))=r^s=\ell^{-1}, \ \ \  i=1,\ldots, \ell.
\end{equation}

By \eqref{eqdensity2}  there exists a compact convex set $U\subset \R^d$ such that
\begin{equation}\label{eq:hs(k) dmax}
\frac{\mu_0(U)}{|U|^s}=\mathcal{H}^s(K_0)^{-1}
\end{equation}
with $|U|\geq\min_{i\neq j}{\rm dist}(\phi_{0,i}(K_0), \phi_{0,j}(K_0))$. Notice that for $i\in\{1,\ldots,\ell\}$, $b_i$ is the fixed point of $\phi_{0,i}$ and so $b_i\in \phi_{0,i}(K_0)$. Hence for all $i\neq j$, since $|\phi_{0,i}(K_0)|=|\phi_{0,j}(K_0)|=r$ and $8nr<1$, we have by the triangle inequality,
\begin{align*}
{\rm dist}(\phi_{0,i}(K_0), \phi_{0,j}(K_0))&\geq \|b_i-b_j\|-2r\geq \frac{1}{2n}-2r>\frac{1}{4n}.
\end{align*}
It follows that $|U|\geq \frac{1}{4n}$.
On the other hand, since $K_0\subset B$, we have $\mu_0(U)=\mu_0(U\cap B)$ and so
\begin{equation}\label{eqUintB}
\frac{\mu_0(U)}{|U|^s}\leq \frac{\mu_0(U\cap B)}{|U\cap B|^s}.
\end{equation}
However, since $U\cap B$ is compact and  convex, by \eqref{eqdensity1}  and \eqref{eq:hs(k) dmax} we see that the equality holds in \eqref{eqUintB}. Therefore $|U|=|U\cap B|$.
Hence  replacing $U$ by $U\cap B$ if necessary, we can assume that $U\subset B$ and thus $|U|\leq 1$. So we have
\begin{equation}\label{eqbdU}
\frac{1}{4n}\leq |U|\leq 1.
\end{equation}

Let $m=\#\mathcal{F}$, where $\mathcal{F}:=\{Q(b,1/(4n)):b\in F, \  Q(b,1/(4n))\cap U\neq\emptyset\}$. Then by \eqref{eq:mu on Q}  and  \eqref{eq:bdd ln},
\begin{equation}\label{eq:mu(U)}
\mu_0(U)\leq m\ell^{-1}\leq m\omega_d^{-1}\left(n-\frac{\sqrt{d}}{2}\right)^{-d}.
\end{equation}
On the other hand, notice that   each cube in $\mathcal{F}$ is of diameter $\sqrt{d}/(2n)$  and so it is contained in the closed $\sqrt{d}/(2n)$-neighborhood of $U$, which we denote by $\overline{\mathbf{V}}_{\sqrt{d}/(2n)}(U)$, and the intersection of any two different cubes in $\mathcal{F}$  has zero Lebesgue measure. Hence by a simple volume argument and the isodiametric inequality (see Lemma \ref{lemiso}),
\begin{equation}\label{eq:leb}
m\left(\frac{1}{2n}\right)^{d}\leq \mathcal{L}^d\left(\overline{\mathbf{V}}_{\sqrt{d}/(2n)}(U)\right)\leq \omega_d2^{-d}\left(|U|+\frac{\sqrt{d}}{n}\right)^d.
\end{equation}
 Now by  \eqref{eq:mu(U)}-\eqref{eq:leb}  and Lemma \ref{lem2} (in which we take $|U|=x$ and recall \eqref{eqbdU}),
\begin{equation*}\label{eq:mu(U)2}
\frac{\mu_0(U)}{|U|^s}\leq\frac{\left(|U|+\frac{\sqrt{d}}{n}\right)^d}{\left(1-\frac{\sqrt{d}}{2n}\right)^d|U|^s}<1+\epsilon.
\end{equation*}
This combining with \eqref{eq:hs(k) dmax} yields that $\mathcal{H}^s(K_0)>1/(1+\epsilon)>1-\epsilon$.

Finally,  we show  that $\mathcal{H}^s(K_1)<\epsilon$. Recall that $\Phi_1$ consists of the similitudes $\phi_{1,1}(x)=rx+(1-r)b_1$, $\phi_{1,2}(x)=rx+(1-r)b_2$ and $\phi_{1,k}(x)=rx+(1-r)8nrb_k$ for $k\in\{3, \ldots, \ell\}$. Let $V=B(0, (4n+1)r)$. Since $(8n+4)r<1$ (see \eqref{eq:rn}),  it is easily checked that   $\phi_{1,1}(K_1)$, $\phi_{1,2}(K_1)$ are both disjoint from $V$, and $\phi_{1,k}(K_1)\subset V$ for $k\in \{3,\ldots,\ell\}$.  See Figure \ref{fig:1}(b) for an illustration of $V$ and  the locations of the fixed points of the elements of  $\Phi_1$. Then by \eqref{eq:n}-\eqref{eq:bdd ln},
\begin{equation*}\label{eq:d U'}
\frac{\mu_1(V)}{|V|^s}=\frac{1-2r^s}{(8n+2)^sr^s}=\frac{\ell-2}{(8n+2)^s}\geq \frac{\omega_d\left(n-\frac{\sqrt{d}}{2}\right)^d-2}{(8n+2)^s}>\epsilon^{-1}.
\end{equation*}
Hence $\mathcal{H}^s(K_1)<\epsilon$ by  Lemma \ref{lem:density}.  This completes the proof of the  proposition.
\end{proof}

With Proposition \ref{mainprop} in hand, we are ready to prove Theorem \ref{mainthm}. The proof is a direct consequence of Proposition \ref{mainprop} combined with a continuity result in \cite{olsen} about  Hausdorff measures of self-similar sets generated by IFSs satisfying the SSC.

\begin{proof}[Proof of Theorem \ref{mainthm}]
Fix $s\in (0,d)$ and $c\in (0,1)$. Let $\epsilon>0$ be so small  that $\epsilon<c<1-\epsilon$. Let $\Phi_t, K_t$  ($t\in [0,1]$) be constructed as in Proposition \ref{mainprop}. Since $a_1,\ldots,a_{\ell}:[0,1]\to\R^d$ are continuous functions,  we easily see  from   \cite[Theorem 1.2]{olsen} that the mapping  $t\mapsto \mathcal{H}^s(K_t)$ is continuous on $[0,1]$. Since $\mathcal{H}^s(K_0)>1-\epsilon$, $\mathcal{H}^s(K_1)<\epsilon$ and $c\in(\epsilon, 1-\epsilon)$, the continuity of $t\mapsto \mathcal{H}^s(K_t)$ implies that $\mathcal{H}^s(K_{t_0})=c$ for some $t_0\in [0,1]$. Therefore, $\mathcal{H}^s(K_{t_0})=c|K_{t_0}|^s$ as $|K_{t_0}|=1$. Letting $K=K_{t_0}$ we complete the proof of Theorem \ref{mainthm}.
\end{proof}

{\noindent \bf  Acknowledgements}. The authors are grateful to their supervisor, De-Jun Feng, for many helpful discussions and suggestions. They also thank the two anonymous referees for their suggestions and comments to  improve the paper.  This research was partially supported by  a
HKRGC GRF grant (project 14301017) and the Direct Grant for Research in CUHK.

\begin{multicols}{2}
{\it \noindent Cai-Yun Ma \\
Department of Mathematics\\
The Chinese University of Hong Kong\\
Shatin\\
Hong Kong}\\
\href{mailto:cyma@math.cuhk.edu.hk}{cyma@math.cuhk.edu.hk}
\columnbreak
 \\
{\it Yu-Feng Wu \\
Department of Mathematics\\
South China University of Technology\\
Guangzhou 510641\\
P.R. China}\\ 
and \\
{\it Department of Mathematics\\
The Chinese University of Hong Kong\\
Shatin\\
Hong Kong}\\
\href{mailto:yufengwu@scut.edu.cn}{yufengwu@scut.edu.cn}\\
\href{mailto: yufengwu.wu@gmail.com}{yufengwu.wu@gmail.com}

\end{multicols}

\end{document}